\tikzset{>=stealth}
\def\@tocline#1#2#3#4#5#6#7{\relax
 \ifnum #1>\c@tocdepth % then omit
 \else
  \par \addpenalty\@secpenalty\addvspace{#2}%
  \begingroup \hyphenpenalty\@M
  \@ifempty{#4}{%
   \@tempdima\csname r@tocindent\number#1\endcsname\relax
  }{%
   \@tempdima#4\relax
  }%
  \parindent\z@ \leftskip#3\relax \advance\leftskip\@tempdima\relax
  \rightskip\@pnumwidth plus4em \parfillskip-\@pnumwidth
  #5\leavevmode\hskip-\@tempdima
   \ifcase #1
    \or\or \hskip 2em \or \hskip 2em \else \hskip 3em \fi%
   #6\nobreak\relax
  \dotfill\hbox to\@pnumwidth{\@tocpagenum{#7}}\par
  \nobreak
  \endgroup
 \fi}
\newtheorem{intro-thm}{Theorem}[]
\theoremstyle{plain}
\newtheorem{thm}{Theorem}[section]
\newtheorem{theorem}[thm]{Theorem}
\newtheorem{lemma}[thm]{Lemma}
\newtheorem{proposition}[thm]{Proposition}
\theoremstyle{definition}
\newtheorem{remark}[thm]{Remark}
\newtheorem{definition}[thm]{Definition}
\newtheorem{example}[thm]{Example}
\newcommand{\intersection}{\cap}
\newcommand{\Proj}{{\rm Proj \,}}
\newcommand{\Spec}{{\rm Spec \,}}
\renewcommand{\tilde}{\widetilde}
\newcommand{\sD}{{\mathcal D}}
\newcommand{\sE}{{\mathcal E}}
\newcommand{\sO}{{\mathcal O}}
\newcommand{\A}{{\mathbb A}}
\renewcommand{\P}{{\mathbb P}}
\newcommand{\etale}{\'{e}tale}
\newcommand{\ra}{\rightarrow}
\begin{document}

\title{Nisnevich local Good compactifications}

\author{Neeraj Deshmukh}
\address{Institut f\"{u}r Mathematik, Universit\"{a}t Z\"{u}rich, Winterthurerstrasse 190, CH-8057 Z\"{u}rich, Switzerland}
\email{neeraj.deshmukh@math.uzh.ch}

\author{Amit Hogadi}
\address{Department of Mathematics, Indian Insititute of Science Education and Research (IISER) Pune, Dr. Homi Bhabha road, Pashan, Pune 411008 India}
\email{amit@iiserpune.ac.in}

\author{Girish Kulkarni}
\address{Fachgruppe Mathematik/Informatik, Bergische Universität Wuppertal, Gaußstraße 20, 42119 Wuppertal, Germany}
\email{kulkarni@uni-wuppertal.de}

\author{Suraj Yadav}
\address{Department of Mathematics, Indian Insititute of Science Education and Research (IISER) Pune, Dr. Homi Bhabha road, Pashan, Pune 411008 India}
\email{surajprakash.yadav@students.iiserpune.ac.in}

\subjclass[2000]{14F20, 14F42}

%\keywords{}

\date{}

\begin{abstract} 
	For a local complete intersection morphism, we establish fibrewise denseness in the $n$-dimensional irreducible components of the compactification Nisnevich locally.  
\end{abstract}

\maketitle

%{\small \tableofcontents}
%%%%%%%%%%%%%%%%%%%%%%%%%%%%%%%%%%%%%%%%%%%%%%%%%%%%%%%%%%%%%%%%%%%%%%%%%%%%%%%
\section{Introduction}

Let $X$ be a scheme over a base $B$. In this short note, we explore certain compactifications of $X$ in the Nisnevich topology. Roughly speaking, by a \textit{compactification} we mean a quasi-compact scheme  $\overline{X}$ with an open immersion $i: X\hookrightarrow \overline{X}$ over $B$. We also want $\overline{X}$ to have additional nice properties, like projectivity, or that the open immersion $i: X\hookrightarrow \overline{X}$ behaves well with respect to fibres over $B$ (See Theorem \ref{ydense} for the precise conditions that we impose). The existence of such ``good compactifications"\footnote{This terminology is borrowed from \cite[\S 10]{levine2006}.} for $X$ is a useful technical tool for intersection theory and $\A^1$-algebraic topology. 

For instance, such compactifications can be employed to prove moving lemmas for algebraic cycles (see \cite{levine2006}, \cite{kai2015}). Moreover, they are useful for constructing finite morphisms from $X$ to $\A_B^n$ or $\P_B^n$. The existence of such finite maps to $\A_B^n$ is very useful, and has recently been used to prove Gabber presentation lemma, and as a consequence Shifted $\A^1$-Connectivity, over Dedekind domains with infinite residue fields \cite{schmidt2018} and later, more generally, over Noetherian domains with infinite residue fields \cite{deshmukh}. 

However, as noted in \cite[Remark 5.1.1(2)]{levine2006}, one can construct examples of schemes which do not admit any finite surjective map to $\A_B^n$ (see also \cite[\S 8]{gabberhypersurfaces}). But such finite maps always exist \textit{Nisnevich locally}. 
Such Nisnevich local compactifications have been constructed in \cite{levine2006}, \cite{kai2015}, and \cite{deshmukh} in different settings. 

The good compactification constructed in \cite[Theorem 4.1]{kai2015} is over a Dedekind domain $B$ with infinite residue fields. This has been used in \cite{schmidt2018} to prove Shifted $\A^1$-connectivity over Dedekind domains with infinite residue fields. In \cite{deshmukh} a weaker form of Kai's result is proved for Noetherian domains with infinite residue fields which turns out to be sufficient for proving Shifted $\A^1$-connectivity.

In this note, we explore the good compactification constructed in \cite[Theorem 2.1]{deshmukh}. We attempt to extend it to the arithmetic setting by removing the hypothesis on the residue fields. Additionally, we do not assume the base to be reduced. We also observe that the proof in \cite{deshmukh} works for any local complete intersection scheme (see Definition \ref{definition-lci-at-point}). \\

\noindent \textit{Notations and Conventions}: Let $S= \Spec R$. Whenever the embedding of $\A^n_S = \Spec R[x_1, \cdots x_n]$ into $\P^n_S = \Proj R[x_0, \cdots x_{n}]$  is considered without any specification, it is the  open embedding given by the complement of hyperplane $V_+(x_{0})$. Accordingly,  given any subscheme of $Y$ of $\A^n$, by projective closure -- denoted $\overline{Y}$ -- we will mean its closure in $\P^n$ via the open embedding described in previous sentence.
\begin{definition}\cite{nisnevich1989}
A Nisnevich neighbourhood of $(X,x)$ is a pair $(X',x')$ together with \etale{} morphism $f \colon X' \to X$ such that $f(x')=x$ and the induced morphism of residue fields $k(x) \to k(x')$ is an isomorphism. We will frequently abuse notation and denote the pre-image $x'$ by $x$.
\end{definition}

Given a morphism of schemes $f \colon X \rightarrow Y$  and $y \in Y$, $X_y$ will denote the fibre over $y$.

\begin{theorem}\label{ydense}
	Let $B$ be the spectrum of a Noetherian ring of finite dimension. Let $g \colon X\rightarrow B$ be a finite type scheme over $B$. Let $x\in X$ be a point lying over a point $b\in B$ such that $g$ is a local complete intersection at $x$ and $\dim(X_b) = n$. Then there exist Nisnevich neighbourhoods $(X',x)\ra(X,x)$ and $(B',b)\ra(B,b)$, fitting into the following commutative diagram 
	\begin{center}
		\begin{tikzcd}
		& X' \arrow[d] \arrow[r]&X\arrow{d}\\
		& B'\arrow[r]&{B}
		\end{tikzcd}
	\end{center}
	and a closed immersion $X'\ra \A_{B'}^N$ for some $N \geq 0$ such that if $\overline{X'}$ is its closure in $\P_{B'}^N$ then $X'_b$ is dense in the union of $n$-dimensional irreducible components of $(\overline{X'})_b$. 
\end{theorem}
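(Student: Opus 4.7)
My plan is to adapt the strategy of \cite[Theorem 2.1]{deshmukh} to the lci setting and to arbitrary, possibly finite, residue fields. Set $c := N - n$. Using the lci hypothesis at $x$, after replacing $X$ by an affine Zariski open neighborhood of $x$ and possibly passing to a Nisnevich neighborhood $(B', b) \to (B, b)$, I will produce a closed immersion $X \hookrightarrow \A^N_{B'}$ (with affine coordinates $T_1, \ldots, T_N$) whose defining ideal is, in a Zariski neighborhood of $x$, generated by a regular sequence $f_1, \ldots, f_c$. After a further Zariski shrinking, these generators cut out $X$ globally in $\A^N_{B'}$.

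Let $F_i$ be the standard homogenization of $f_i$ to degree $\deg f_i$ in projective coordinates $T_0, T_1, \ldots, T_N$, let $H_\infty = V(T_0) \subset \P^N_{B'}$, and set $Z := V(F_1, \ldots, F_c) \subset \P^N_{B'}$. Then $Z \cap \A^N_{B'} = X$, so the scheme-theoretic closure $\overline{X}$ is contained in $Z$. The restrictions $\bar{f}_i := F_i|_{T_0 = 0}$ are the top-degree homogeneous parts of the $f_i$. The key reduction is as follows: if the images of $\bar{f}_1, \ldots, \bar{f}_c$ in $\kappa(b)[T_1, \ldots, T_N]$ form a regular sequence, then $Z_b$ is a complete intersection of pure dimension $n$ in $\P^N_{\kappa(b)}$ and $Z_b \cap H_\infty|_b$ has dimension $n - 1$. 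Every $n$-dimensional irreducible component $W$ of $\overline{X}_b$ is then a component of $Z_b$ for dimension reasons, cannot lie in $H_\infty|_b$, and so meets $X_b = Z_b \cap \A^N_{\kappa(b)}$ in a dense open of $W$, which gives the required fiberwise density.

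It remains to arrange the leading-form regular sequence condition at $b$. Over an infinite residue field $\kappa(b)$ this follows from a Bertini-style general position argument: a generic linear change of coordinates on $\A^N_{B'}$ combined with a unimodular modification $(f_1, \ldots, f_c) \mapsto (\sum_j A_{ij} f_j)$, where $A$ is invertible at $x$, puts the leading forms in general position. The main obstacle is the case of finite $\kappa(b)$: a Nisnevich neighborhood of $B$ does not enlarge residue fields, so the naive general position argument is unavailable. I expect to overcome this by allowing higher-degree polynomial entries in the modification matrix $A$, exploiting the abundance of high-degree monomials to escape the (finite) bad locus in the relevant moduli of top-form configurations, and by using the henselian structure of $(B', b)$ to lift the resulting fiberwise adjustment to a regular sequence over $B'$. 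A further Nisnevich localization of $X$ at $x$ may be needed to reabsorb these modifications into a legitimate closed immersion into $\A^N_{B'}$.
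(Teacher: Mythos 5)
Your reduction is attractive and genuinely different from the paper's route: if one can choose equations $f_1,\dots,f_c$ vanishing on $X$ whose top-degree forms cut out codimension $c$ in $\P^{N-1}_{\kappa(b)}$, then indeed $Z_b\cap H_\infty$ has dimension $\le n-1$, no $n$-dimensional component of $(\overline{X})_b$ can lie at infinity, and the fiberwise density follows (in fact you only need $\overline{X}\subseteq Z$ and the bound on $\dim(Z_b\cap H_\infty)$, so the claim that the $f_i$ generate the ideal globally after shrinking is not even needed). The problem is that this reduction concentrates the entire difficulty of the theorem into the ``general position of leading forms'' step, and the toolkit you propose for that step does not work. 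Consider the paper's own first example, $X=V(tuv+u+1)\subset\A^2_{k[t]}$ with $b=(t)$ and $k$ infinite: the ideal is principal, and the top-degree form of every element $g\cdot(tuv+u+1)$ equals $t\cdot(\text{top form of }g)\cdot uv$, hence lies in $(t)$. A linear change of coordinates on $\A^N$ transforms top forms by the same linear substitution and a modification $(f_i)\mapsto(\sum_j A_{ij}f_j)$ (even with higher-degree polynomial entries in $A$) only produces further elements of the ideal, so neither escapes this divisibility; the leading forms die identically in $\kappa(b)$ and $Z_b\cap H_\infty$ is all of $H_\infty$. So your ``Bertini-style'' claim already fails over an infinite residue field for a fixed embedding, not just over finite fields: what is required is a genuinely new embedding after shrinking $X$ Zariski/Nisnevich-locally around $x$ (and possibly increasing $N$), and your sketch offers no mechanism for producing it — the Nisnevich localization of $X$ appears only as bookkeeping ``to reabsorb the modifications.'' The statement you have reduced to is essentially a fiberwise Noether-position-at-infinity statement of the same strength as the theorem itself.

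For comparison, the paper does not attempt any direct general-position argument on equations. It runs a double induction (on the length of the regular sequence and on the fiber dimension $n$): in the hypersurface case it uses homogeneous prime avoidance plus a Veronese re-embedding (Lemma \ref{ver}) to handle finite residue fields, fibers $\overline{X}$ over $\P^{n-1}_B$ as a relative curve via a blow-up (Lemma \ref{1-d}), passes to the henselization of the base $\P^{n-1}_B$ at the image of $x$ to build a finite map to an open of $\P^1_T$, and then uses the inductive hypothesis for $T$ together with Stein factorization to control the components at infinity, with Lemma \ref{lemma-fibre-dimension-projective-closure} driving the induction on lci length. If you want to salvage your approach, the missing ingredient is precisely an argument (Nisnevich-local in both $X$ and $B$) producing an embedding for which $\overline{X}_b\cap H_\infty$ has dimension $\le n-1$; as it stands, that is assumed rather than proved.
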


The above theorem is different from \cite[Theorem 2.1]{deshmukh} in the respect that the hypothesis on the residue fields is relaxed, and we also extend the argument to local complete intersection morphisms.

\begin{remark}\label{Kaiapp}
	Theorem \ref{ydense} compares with \cite[Theorem 4.1]{kai2015} as follows:
	\begin{enumerate}
		\item 	The above theorem is a weaker statement than \cite[Theorem 4.1]{kai2015} (see also \cite[Theorem 10.2.2]{levine2006}) in the sense that we prove the denseness of the fibre only in the $n$-dimensional components whereas in \cite{kai2015} $X'_b$ is proved to be dense in $(\overline{X'})_b$. 
		\item On the other hand, \cite[Theorem 4.1]{kai2015} only deals with base a Dedekind domain while Theorem \ref{ydense} is proved for Noetherian rings of finite dimension. Further, there is no restriction on the residue fields of the base to be infinite.
		%\item In the proof of \cite[Theorem 4.1]{kai2015} the author mentions that the base is assumed to be Dedekind to ensure that the projective closure of an equidimensional scheme remains equidimensional.
		\item Similar arguments as in the proof of Theorem \ref{ydense} should also generalise \cite[Theorem 4.1]{kai2015} to all Dedekind domains (without any conditions on the residue fields).
	\end{enumerate}
\end{remark}

Just as \cite[Theorem 2.1]{deshmukh}, the proof of Theorem \ref{ydense} is similar to the proof of \cite[Theorem 4.1]{kai2015}. The new ingredient is the observation that over finite fields one can replace arguments involving generic hyperplanes with generic hypersurfaces via the Veronese embedding.

We end the paper with an application of Theorem \ref{ydense} to Nisnevich local finite maps to $\A^n$.\\

\noindent\textbf{Data availability statement.} Data sharing not applicable to this article as no datasets were generated or analysed during the current study.\\

\noindent\textbf{Acknowledgements.} The first-named author was supported by the INSPIRE fellowship (IF160348) of the Department of Science and Technology, Govt.\ of India during the course of this work. He was also partially supported by the Swiss National Science Foundation (SNF), project 200020\_178729. The third-named author was supported by DFG SPP 1786 grant for his stay at Bergische Universit\"at Wuppertal. The last-named author was supported by NBHM fellowship of the Department of Atomic Energy, Govt.\ of India during the course of this work. We thank the anonymous referee for their comments and suggestions.

\section{Proof of Theorem}
\label{section-proof}

In this section we prove our result. Before we begin with the proof, we would like to remark that it is very easy to construct examples of schemes for which the fibre $X_b$ is not dense in the fibre $(\overline{X})_b$ of its projective closure, as can be seen from the following example.

\begin{example}
	Consider the affine scheme $X =\Spec (k[t][X,Y]/(tXY+X+1))$ over $\Spec k[t]$. Homogenising followed by substituting $t=0$, we get 
	\[(tXY+X+1)\rightsquigarrow tXY+XZ+Z^2\rightsquigarrow (X+Z)Z=(\overline{X})_0.\]
	On the other hand, reversing the order of these operations gives us,
	\[(tXY+X+1)\rightsquigarrow X+1\rightsquigarrow (X+Z)=\overline{X_0}\]
	Thus, over $t=0$, the fibre $X_0$ of $X$ is not dense in the fibre $(\overline{X})_0$ of its projective closure. That is to say that the projective closure $\overline{X}$ ``acquires components at infinity".
\end{example}

The next example shows that fibre dimension can increase  after taking the projective closure.
\begin{example}
	Let $S = \A^2_k =  \Spec k [x,y]$ and $X = \P^2_S = \Proj R [t,w,v] $, where $R = k[x,y]$. Let $x = (0,0,0,1,0) \in X$ lying above $s = (0,0) \in S$ and $Y$ be the blowup of $X$ at $x$ . As $Y$ is projective (over $S$), consider a closed embedding $Y \hookrightarrow \P^N_S$ for some $N$. $Y_s$ contains the exceptional divisor which is $\P^3_k$, hence its dimension is 3. We will show that there exists a subscheme of $\A^N_S$ all of whose fibres have dimension at most $2$ and whose projective closure in $\P^N_S$ is $ Y$.\\
	Take the point $x' = (0,0,0,0,1)$ and an irreducible affine neighbourhood $X'\subseteq X$ around it not containing $x$. In fact, we can take $X'$ to be the complement of hyperplane $V_+(v) $, which is isomorphic to $\A^2_S$. Then $X'_s$ is isomorphic to $\A^2_k$, therefore of dimension $2$. Moreover as $X'$ does not contain $x$, there is an open affine in $Y$, say $Z$ which is isomorphic to $X'$ via the blowdown map and a point $z$ in $Z$ mapping to $x'$. As the blowup of an irreducible scheme is irreducible, $\overline{Z} = Y$. \\
By shrinking $Z$ around $z$, if needed, we can assume $Z$ lies in $ (\P^N_S \setminus V_+(x_{0}) )  \cong \A^N_S$.  So we end up with an affine open scheme $Z$ in $\A^N_S$ with fibre dimension 2 at the point $s$ and  whose projective closure is $Y$ with fibre dimension $3$ at the point $s$.
\end{example}

\begin{remark}
	Note that we need the base scheme to be at least of dimension 2 for the argument in previous example to work. In fact when the base is a Dedekind domain projective closure  of an equidimensional scheme will be equidimensional. 
\end{remark}

\begin{remark}\label{fiber-dim}
Throughout this section, $B$ will denote a pointed base scheme with a point $b\in B$. 
For a scheme $Z/B$, we use the phrase fibre dimension of $Z$ to denote $\dim(Z_b)$. 
\end{remark}

We now turn to the proof of our theorem. \cite[Theorem 2.1]{deshmukh} is proved for any scheme which is either smooth or a divisor in a smooth scheme. In fact, by applying induction on codimension the proof is valid for any $g \colon X\rightarrow S$ which is a local complete intersection at $x\in X$.

\begin{definition}
	\label{definition-lci-at-point}
	Let $g \colon X \rightarrow B$ be a morphism locally of finite type. The morphism $g$ is said to be a local complete intersection at a point $x \in X$ if there exists an open neighbourhood $U\subseteq X$ of $x$ such that there exists a commutative diagram
	\begin{center}
		\begin{tikzcd}
		X\arrow[dr] & U\arrow[l, hook']\arrow[r,"i", hook]\arrow[d]& \A^N_B\arrow[dl]\\
		& B&  
		\end{tikzcd}
	\end{center}
	where $i$ is a regular embedding. That is, $g|_U$ factors via a regular embedding $ i \colon U \hookrightarrow \A^N_B$. We say that a local complete intersection at $x$ has the length $r$ if the regular sequence associated to the regular embedding $i \colon U\hookrightarrow \A^N_B$ has the length $r$. In fact, using induction on the length of regular sequence and Krull principal ideal theorem, the length of the regular sequence is precisely the codimension of $U$ in $\A^N_B$ at $x$ (see \cite[02IE]{stacksradicial}).

	A morphism $g \colon X\rightarrow B$ locally of finite type is said to be a local complete intersection morphism if it is a local complete intersection at each point $x\in X$.
	
	Note that since we are working over a Noetherian base $B$, the above notion is independent of the choice of embedding $i$ (see \cite[068E, 063L]{stacksradicial}).
	
\end{definition}

The following lemma about the existence of generic hyperplanes will be used in the proof of Theorem \ref{ydense}. Let $B$ be a spectrum of a Noetherian ring.

\begin{lemma}\label{ver}
	Let $X$ be a closed subscheme in $\P^N_{B}$, with fibre dimension $n$ and $x$ be a point of $X$ lying over $b \in B$. Then there exist
	\begin{enumerate}
		\item a closed embedding of $X$ in $\mathbb{P}^{N'}_{B}$, for some $N' >0$.
		\item 	  $n$ hyperplanes $H_1, \cdots, H_n$ in $\mathbb{P}^{N'}_{B}$ missing $x$ 
		\item fibre dimension of $X \cap H_1 \cap \cdots \cap H_n$ is zero. (c.f. Remark \ref{fiber-dim})
	\end{enumerate}

\end{lemma}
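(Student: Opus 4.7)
The plan is to build hypersurfaces $S_1,\ldots,S_n$ in $\P^N_B$ one at a time, each cutting the top fiber dimension down by one, and then re-embed $\P^N_B$ into a single $\P^{N'}_B$ via a Veronese of large enough degree so that every $S_k$ becomes a hyperplane.

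Concretely, I would set $Z_0:=X$ and inductively find a hypersurface $S_k\subseteq \P^N_B$ of some degree $d_k$ satisfying (a) $x\notin S_k$ and (b) $S_k$ contains no irreducible component of $(Z_{k-1})_b$ of the maximum dimension $n-k+1$; Krull's principal ideal theorem then guarantees that $Z_k:=Z_{k-1}\cap S_k$ has fiber dimension at most $n-k$ over $b$, so $(Z_n)_b$ has dimension zero. To produce $S_k$, let $V_1,\ldots,V_{r_k}$ be the $(n-k+1)$-dimensional components of $(Z_{k-1})_b$, pick a closed specialization $x_0$ of $x$ in $\P^N_b$, and a closed point $q_i\in V_i$ for each $i$ (which exist since every $V_i$ is positive-dimensional). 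By Serre vanishing applied to the ideal sheaf of the finite closed subscheme $\{x_0,q_1,\ldots,q_{r_k}\}\subseteq \P^N_{k(b)}$, the evaluation map $H^0(\P^N_{k(b)},\sO(d_k))\to k(x_0)\oplus\bigoplus_i k(q_i)$ is surjective for $d_k$ large, so a form $\bar f_k$ of degree $d_k$ not vanishing at any of these points exists. Lift $\bar f_k$ (after clearing denominators by a nonzero scalar from $k(b)$) to a homogeneous polynomial $f_k$ with coefficients in $R:=\Gamma(B,\sO_B)$ and set $S_k:=V(f_k)\subseteq \P^N_B$. Nonvanishing of $\bar f_k$ at $x_0$ forces $x\notin S_k$ because $\p_x\subseteq \p_{x_0}$, while nonvanishing at $q_i\in V_i$ forces $V_i\not\subseteq S_k$.

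Finally, put $d:=\mathrm{lcm}(d_1,\ldots,d_n)$ and apply the degree-$d$ Veronese embedding $v_d:\P^N_B\hookrightarrow \P^{N'}_B$ with $N'=\binom{N+d}{d}-1$, which together with $X\hookrightarrow \P^N_B$ furnishes the required closed immersion of $X$ into $\P^{N'}_B$. Replacing each $f_k$ by $\tilde f_k:=f_k^{d/d_k}$ gives a degree-$d$ form, linear in the Veronese coordinates, so it defines a hyperplane $H_k\subseteq \P^{N'}_B$; since $v_d^{-1}(H_k)=V(f_k^{d/d_k})=V(f_k)=S_k$ set-theoretically, $X\cap H_1\cap\cdots\cap H_n$ coincides set-theoretically with $Z_n$ and has fiber dimension zero over $b$, while each $H_k$ misses $x$ because $S_k$ does. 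The main obstacle throughout is the finite-residue-field case: when $k(b)$ is finite, a single $\P^N_{k(b)}$ may fail to contain any linear hyperplane avoiding a prescribed finite set of closed points, and the Veronese trick flagged just before the lemma is exactly the device that lets us use degree-$d_k$ hypersurfaces in $\P^N$ as hyperplanes in $\P^{N'}$.
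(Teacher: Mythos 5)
Your proposal is correct and follows essentially the same route as the paper: cut the fiber over $b$ inductively by hypersurfaces avoiding $x$, equalize their degrees, and use a Veronese embedding to turn them into hyperplanes. The only (minor) difference is that you produce each hypersurface by separating finitely many closed points in $\P^N_{k(b)}$ via Serre vanishing and then lifting the form to $R$, whereas the paper invokes homogeneous prime avoidance directly; both yield the same conclusion.
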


\begin{proof}
By homogeneous prime avoidance lemma we can find hypersurfaces $K_1, \cdots, K_n$ in $\P^N_{B}$ satisfying conditions $(2)$ and $(3)$. Moreover by raising their degrees, if required, we can assume that all these hypersurfaces have the same degree $d$ and they are a part of a basis of $\Gamma(\P^N_B, O(d))$. Now taking degree $d$ Veronese embedding we obtain hyperplanes $H_1, \cdots, H_n$ in $\mathbb{P}^{{n+d \choose d}-1}_B$ which restrict to $K_i$'s in $\P^N_B$. Hence they satisfy the required conditions.
\end{proof}

The following intermediary lemma will be used repeatedly (see \cite[Lemma 10.1.4]{levine2006}).
\begin{lemma}\label{1-d}
	Let $X/B$ be an affine scheme and $x\in X$ which lies over $b$. Assume there exists an embedding $X\hookrightarrow    \A^N_B$ such that if $\overline{X}$ denotes the projective closure of $X$ in $\P^N_B$, then 
	$$ \dim(X_b) = \dim(\overline{X})_b = n.$$Then there exists 
	\begin{enumerate}
		\item an open neighbourhood $X_0$ of $x$ (in $X$), 
		\item a projective scheme $\tilde{X}$, 
		\item an open immersion $X_0\hookrightarrow \tilde{X}$ and 
		\item a projective morphism $\psi: \tilde{X}\rightarrow \P^{n-1}_B$
	\end{enumerate}
	such that all non-empty fibres of $\psi$ have dimension one.
\end{lemma}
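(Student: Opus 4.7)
The strategy is to produce $\tilde X$ as the closure of the graph of a linear projection $\overline X\dashrightarrow \P^{n-1}_B$. First I would apply Lemma~\ref{ver} to the pair $(\overline X,x)$ to obtain a re-embedding $\overline X\hookrightarrow \P^{N'}_B$ together with $n$ hyperplanes $H_1,\dots,H_n\subseteq \P^{N'}_B$ missing $x$ such that the fiber dimension of $\overline X\cap H_1\cap\cdots\cap H_n$ over $b$ is zero. Let $L := H_1\cap\cdots\cap H_n$, which is a linear subspace of $\P^{N'}_B$ of codimension $n$ not meeting $x$, and let $f_i$ be the linear forms defining $H_i$. The tuple $[f_1:\cdots:f_n]$ defines a rational map $\phi\colon \P^{N'}_B\dashrightarrow \P^{n-1}_B$ whose indeterminacy locus is $L$; its restriction to $\overline X$ gives a rational map $\phi_{\overline X}\colon \overline X\dashrightarrow \P^{n-1}_B$ defined on $U:=\overline X\setminus(\overline X\cap L)$.

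I would then take $\tilde X$ to be the scheme-theoretic closure of the graph of $\phi_{\overline X}$ inside $\overline X\times_B\P^{n-1}_B$. Since $\overline X$ and $\P^{n-1}_B$ are projective over $B$, so is $\tilde X$, and the second projection gives a projective morphism $\psi\colon \tilde X\to \P^{n-1}_B$. Setting $X_0:=X\cap U$, which is open in $X$ and contains $x$ (as $x$ avoids every $H_i$), the graph of $\phi_{\overline X}|_{X_0}$ realises $X_0$ as an open subscheme of $\tilde X$. This produces items (1)--(4).

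The substance of the lemma is the claim on fiber dimensions of $\psi$. I would shrink $B$ to an open neighborhood of $b$ using upper semicontinuity of fiber dimension for the proper morphisms $\overline X\to B$ and $\overline X\cap L\to B$, so as to ensure $\dim\overline X_{b'}\le n$ and $\dim(\overline X\cap L)_{b'}\le 0$ for every $b'\in B$. The key geometric fact is that $\tilde X$ is the strict transform of $\overline X$ inside $\mathrm{Bl}_L(\P^{N'}_B)$, and the latter is a $\P^{N'-n+1}$-bundle over $\P^{n-1}_B$ via the extended linear projection. A fiber of $\psi$ over a point $p\in\P^{n-1}_{k(b')}$ is therefore the strict transform of $\overline X_{b'}\cap\Lambda_p$, where $\Lambda_p\cong\P^{N'-n+1}_{k(b')}$ is the unique $(N'-n+1)$-plane containing $L_{b'}$ and projecting to $p$. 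The numerical hypotheses just arranged force this intersection to be at most one-dimensional, and the zero-dimensional contribution coming from $\overline X_{b'}\cap L_{b'}$ is absorbed into the exceptional divisor under the strict transform, giving $\dim\psi^{-1}(p)\le 1$.

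The main obstacle I expect is making the uniform fiber dimension bound precise when $\overline X\to B$ fails to be equidimensional away from $b$. This is exactly the point at which the flexibility granted by Lemma~\ref{ver}, of replacing hyperplanes by generic hypersurfaces of arbitrary degree via a Veronese embedding, becomes essential: it lets one choose the $H_i$ so that their intersection with $\overline X$ has the required zero-dimensional fiber over $b$ even when the residue field $k(b)$ is finite, and then a combination of upper semicontinuity and shrinking of $B$ propagates this control to a Nisnevich neighborhood of $b$.
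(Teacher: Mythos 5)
Your construction---re-embedding via Lemma \ref{ver}, projecting away from the linear centre $L=H_1\cap\cdots\cap H_n$, taking $\tilde X$ to be the graph closure (equivalently the strict transform of $\overline X$ in $\mathrm{Bl}_L(\P^{N'}_B)$), and bounding fibre dimension by intersecting with the planes $\Lambda_p\supset L$ together with the zero-dimensional fibres of $\overline X\cap L$ after shrinking $B$---is essentially identical to the paper's proof. The only cosmetic difference is that the paper phrases $\tilde X$ directly as the strict transform under the blowup along $V(\Psi)$ and identifies the fibre of $\psi$ over $y$ with the linear subscheme $V(y)$, which is exactly your $\Lambda_p$ argument.
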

\begin{proof}
	We follow the arguments given in \cite[Theorem 4.1]{kai2015}  (see also \cite[Theorem 10.1.4]{levine2006}).
	After possibly shrinking $B$, using Lemma \ref{ver} we can find $n$ hyperplanes $\Psi=\lbrace\psi_1,\ldots,\psi_n\rbrace$ which are a part of a basis of $\Gamma(\P^N_B,\sO(1))$ as a $B$-module. The choice is such that the vanishing locus of $\Psi$, denoted $V(\Psi)$, does not contain $x$ and  $\overline{X}\cap V(\Psi)$ is finite over $B$. Let $p \colon \tilde{\P^N}\rightarrow \P^N$ be the blowup of $\P^N$ along $V(\Psi)$, and $\tilde{X}$ the strict transform of $\overline{X}$. The rational map defined by $\Psi$ induces the map $\psi \colon \tilde{\P^N_B}\rightarrow \P^{n-1}_B$. Let $$X_0 =\overline{X}\setminus V(\Psi)\intersection X.$$ We have the following commutative diagram:
	\begin{center}
		\begin{tikzcd}[row sep=tiny]
		& \tilde{X} \arrow[dd] \arrow[r,hook,"cl."]&\tilde{\P^N_B}\arrow{dd}\arrow[r,"\psi"]&\P^{n-1}_B\\
		X_0 \arrow[ur,hook] \arrow[dr,hook] & \\
		& \overline{X}\arrow[r,hook,"cl."]&{\P^N_B}
		\end{tikzcd}
	\end{center}
	We claim that $\psi \colon \tilde{X}\rightarrow\P^{n-1}_B$ has fibre dimension one.
	To see this, choose any point $y\in\P^{n-1}_B$, and consider the composite $\Spec(\Omega)\overset{y}{\rightarrow}\P^{n-1}_B\rightarrow B$. Then, the fibre of $\psi$ over $y$ may be identified with a linear subscheme $V(y)$ of $\P^N_{\Omega}$, of dimension $N-n+1$. Furthermore, $V(y)$ contains the base change $V(\Psi)_{\Omega}$, which has dimension $N-n$, by construction. Again due to the construction, the intersection $V(y)\cap \overline{X}\cap V(\Psi)_{\Omega}$ is finite in $\P^N_{\Omega}$. This means that $V(y)\cap \overline{X}$ has dimension $1$ in the projective space $V(y)$.

	Further note that for $x\in V(\Psi)$, $p^{-1}(x)\simeq\P^{n-1}$. Also, the exceptional divisor of $\tilde{X}$ is an irreducible subscheme. Therefore, for any point $x\in V(\Psi)\cap X$, the fibre $\tilde{X}_b$ is an irreducible subscheme of $\P^{n-1}$ of dimension $n-1$. Therefore, $p^{-1}(\overline{X})=\tilde{X}$, so that $p \colon \psi^{-1}(y)\cap \tilde{X}\rightarrow V(y)\cap \overline{X}$ is a bijection. Thus, $\psi \colon \tilde{X}\rightarrow \P^{n-1}_B$ has 1-dimensional fibres.
\end{proof}

Now we prove Theorem \ref{ydense}.

\begin{proof}[Proof of Theorem \ref{ydense}] 
	The proof proceeds by double induction: on the dimension of the fibre and the length of the local complete intersection at $x$ (see Definition \ref{definition-lci-at-point}).\\
	
	\noindent \underline{Step 0}: \textbf{Setting up the double induction.} If length is zero, then Zariski locally, $X\simeq \A^N_B$	 for some $N$. In this case the proof is trivial. So, assume that the length is $1$. That is, Zariski locally, $X$ can be embedded as a hypersuface in $\A^N_B$.
	
	We will now prove the result for this case. After proving the length $1$ case, we will apply induction on the length to get the result for any local complete intersection.
	
	Fixing the length to be $1$, we proceed by induction on the dimension of the fibre $X_b$. The case $n=0$ follows from a version of Hensel's lemma.\\
	
	\noindent \underline{Step 1}: \textbf{Expressing as a relative curve over $\P^{n-1}_B$.}  Zariski locally, we can write $X$ as a hypersurface in $\A^{n+1}_B$. Let $\overline{X}$ denote its reduced closure in $\P^N_B$. By Lemma \ref{lemma-fibre-dimension-projective-closure}, $\dim (\overline{X})_b=n$. We can now apply Lemma \ref{1-d}, to get a projective morphism $\psi \colon \tilde{X}\rightarrow \P^{n-1}_B$ with 1-dimensional fibres. Now set $T= \P^{n-1}_B$ and $t=\psi(x)$.\\  
	
	\noindent\underline{Step 2}: \textbf{Constructing a finite morphism to an open subscheme of $\P^1_T$.}  Choose any projective embedding $\tilde{X} \hookrightarrow \P^{N_2}_T$. Let $(\tilde{X})_t$ and $(X_0)_t$ denote the fibres over $t$ of $\tilde{X}$ and $X_0$ respectively. Recall from Lemma \ref{1-d} that $X_0$ is an open neighbourhood of $x$. Choose a hypersurface $H_t \subseteq \P^{N_2}_t$ satisfying the following three conditions:
	\begin{enumerate}
		\item $x \in H_t$ (if $x$ is a closed point in $(X_0)_t$) 
		\item $(\tilde{X})_t$ and $H_t$ meet properly in $\P_t^{N_2}$
		\item $H_t$ does not meet $\overline{({X_0})_t}\setminus{({X_0})_t}$.
	\end{enumerate}
	Let $T^h\rightarrow T$ be the Henselisation of $T$ at $t$ and $H\subseteq \P^{N_2}_{T^h}$ a flat family of hypersurfaces over $T^h$ which restricts to $H_t$ over $t$.
	
	Now we restrict to a suitable affine Nisnevich neighbourhood of $T$, which we again denote by $T$ (and  base change everything to $T$). Using the hypersurface $H$ and structure of finite algebras over a henselian ring, we can choose a Cartier divisor $\sD$ which fits into the following diagram.
	\begin{center}
		\begin{tikzcd}[row sep=tiny,column sep=large]
		& \tilde{X} \arrow{r}{projective}[swap]{1-dim}&T\arrow[r,"Nis"]&\P^{n-1}_B\\
		X_0 \arrow[ur,hook] & \\
		& \sD\arrow[uu]\arrow[ul,hook,"{Cartier. div}"]\arrow{uur} [swap]{finite}
		\end{tikzcd}
	\end{center}
	For sufficiently large $m$ we can find a section $s_0$ of $\Gamma(\tilde{X},\sO_{\tilde{X}}(m\sD))$ which maps to a nowhere vanishing section of $\Gamma(\sD,\sO_{\sD})$. Let $s_1 \colon \sO_{\tilde{X}} \ra \sO_{\tilde{X}}(m\sD)$ be the canonical inclusion. Since the zero-loci of $s_0$ and $s_1$ are disjoint, we get a map
	$$f=(s_0,s_1) \colon\tilde{X} \ra \P^1_{T}.$$
	
	Since the quasi-finite locus of a morphism is open, shrink $T$ around $t$ such that $\sD$ is contained in the quasi finite locus of $f$ after the base change. Let $X_0'$ be the quasi-finite locus of the base change.
	
	\begin{center}
		\begin{tikzcd}[column sep=large]
		& f^{-1}(\infty_T)=\sD \arrow[d] \arrow[r,hook]&X_0'\arrow{d}[swap]{{quasi-finite}}\arrow[r,hook]&\tilde{X}\arrow[dl,"f"]\\
		& \infty_T\arrow[r,hook]&{\P^1_T}
		\end{tikzcd}
	\end{center}
	Then the subset $W=f(\tilde{X}\setminus X_0')\subseteq \P^1_T$ is proper over $T$ and is contained in $\P^1_T\setminus H_t=\A^1_T$. Hence, it is finite over $T$. The map $\tilde{X}\setminus f^{-1}(W)\ra \P^1_T\setminus W$, being proper and quasi-finite, is finite. By condition $(1)$, we see that $\tilde{X}\setminus f^{-1}(W)$ contains $x$.\\
	
	\noindent\underline{Step 3}: \textbf{Constructing the required neighbourhoods.} Now by induction there exist Nisnevich neighbourhoods $B_1\ra B$ and $T_1 \ra T$ such that the projective compactification $T_1\ra \overline{T_1}$ is fibre-wise dense in the union of $n$-dimensional irreducible components over $B_1$. Take a factorization of $f$ of the form $$\tilde{X} \hookrightarrow \P^{N_3}_{T_1}\times_{T_1}\P^1_{T_1} \ra \P^1_{T_1}.$$ Let $\overline{X_1}$ denote the reduced closure of $\tilde{X}$ in $\P^{N_3}_{\overline{T_1}}\times_{\overline{T_1}}\P^1_{\overline{T_1}}$. We get the following diagram where every square is Cartesian
	\begin{center}
		\begin{tikzcd}[row sep=large]
		X_2:=\tilde{X}\setminus f^{-1}(W) \arrow[d,hook] \arrow[r,hook]&\tilde{X}\arrow[d,hook]\arrow[r,hook]&\overline{X_1}\arrow[d,hook]\\
		\P^{N_3}_{T_1}\times_{T_1}(\P^1_{T_1}\setminus W) \arrow[r,hook]\arrow{d}&\P^{N_3}_{T_1}\times_{T_1}\P^1_{T_1} \arrow[r,hook]\arrow{d}&\P^{N_3}_{\overline{T_1}}\times_{\overline{T_1}}\P^1_{\overline{T_1}}\arrow{d}\\
		\P^1_{{T_1}}\setminus W \arrow[r,hook] &\P^1_{{T_1}}\arrow[r,hook]&\P^1_{\overline{T_1}}
		\end{tikzcd}
	\end{center}
	
	By Stein factorization we decompose the map $\overline{f_1}\colon \overline{X_1}\ra \P^1_{\overline{T_1}}$ as 
	$$ \overline{f_1} \colon \overline{X_1}\ra \overline{X_2}\xrightarrow{finite} \P^1_{\overline{T_1}},$$ where the first map has geometrically connected fibres. Since $\overline{f_1}$ is finite over the open set $\P^1_{T_1}\setminus W$, 	$\overline{X_2}\times_{\P^1_{\overline{T_1}}}(\P^1_{T_1}\setminus W)$ is isomorphic to $X_2 :=\tilde{X}\setminus f^{-1}(W) $. Since ${X_2}$ is open in $\overline{X_2}$, the fibre dimension of $\overline{X_2}$ is at least $n$. Combining this with the fact that $\overline{X_2}$ is finite over $\P^1_{\overline{T_1}}$, we conclude that the fibre dimension of $\overline{X_2}$ over $B_{1}$ is exactly $n$.
	
	We observe that since $T_{1}$ is fibrewise dense in the union of $n$-dimensional irreducible components of $\overline{T_{1}}$, so is $\P^1_{{T_1}}$ (in $\P^1_{\overline{T_1}}$). Also as $W$ is finite over $T_{1}$, $\P^1_{T_1}\setminus W$ is fibrewise dense in $\P^1_{T_1}$. Hence it is dense in the union of $n$-dimensional irreducible components of $\P^1_{\overline{T_1}}$.
	Now we claim that $X_2 $ intersects the fibre of $\overline{X_2}$ over any point $b_{1}$ of $B_{1}$. Let $X_{2}'$ be an $n$-dimensional irreducible component of the fibre $(\overline{X_2})_{{b}_{1}}$. Then the induced map $X_{2}' \rightarrow( \P^1_{\overline{T_1}})_{{b}_{1}}$ is a finite morphism of schemes of the same dimension. Hence it is a surjection to an irreducible component say, $U$ of $( \P^1_{\overline{T_1}})_{{b}_{1}}$. Further $\P^1_{T_1}\setminus W$ intersects $U$ by denseness. Taking the inverse image of its intersection with the irreducible component proves that $X_{2}$ intersects $X$.
	
	Since $\overline{X_2}$ is projective over $B_{1}$, we choose any embedding of it in a projective space $\P^{N}_{{B}_{1}}$. Then for the closed subscheme $\overline{X_2} \setminus X_2$ (with the reduced structure) there exists a hypersurface $H$ of $\P^{N}_{{B}_{1}}$ of degree, say $d$, containing $\overline{X_2} \setminus X_2$, not containing the point $x$ and such that $H_{{b}_{1}}$ intersects $({X_2})_{{b}_{1}}$ properly in $\P^{N}_{{b}_{1}}$. Hence by discussion in previous paragraph, $H_{{b}_{1}}$ also intersects $(\overline{X_2})_{{b}_{1}}$ properly. Replacing $X_{2}$ by $\overline{X_2} \setminus H$ and taking $d$ fold Vernose embedding we may assume $H$ to be $\P^{N-1}_{\infty}$. This gives us an embedding $$\overline{X_2} \setminus H \hookrightarrow \A^{N}_{{B}_{1}} = \P^{N}_{{B}_{1}} \setminus \P^{N-1}_{\infty}, $$ with the required properties.\\
	
	\noindent\underline{Step 4}: \textbf{Dealing with length r.}  Let the length of the regular sequence at $x$ be $r$. Then Zariski locally, $X$ is given by a regular sequence $(f_1,\ldots, f_r)$ in $\A^{N}_B$. Now, consider the affine scheme $V:=\Spec B[X_1,\ldots, X_{n+r}]/(f_1,\ldots,f_{r-1})\subseteq \A^N_B$. Then in a neighbourhood of the point $x$, $X$ can be thought of as a divisor in $V$ cut out by the function $f_r$. Note that $V$ has the length $r-1$ at $x$. Hence, by induction, the Theorem holds for $V$. As $X$ is a divisor in $V$ (cut out by the function $f_r$), applying Lemma \ref{lemma-fibre-dimension-projective-closure}, we get that $\dim (\overline{X})_b=n$. Here $\overline{X}$ is the closure of $X$ inside the compactification $\overline{V}$ of $V$ which we get by induction on the length of the regular sequence.
	
	Then by Lemma \ref{1-d}, we have a commutative diagram,
	\begin{center}
		\begin{tikzcd}[row sep=tiny]
		& \tilde{X} \arrow[dd] \arrow[r,hook,"cl."]&\tilde{\P^N_B}\arrow{dd}\arrow[r,"\psi"]&\P^{n-1}_B\\
		X_0 \arrow[ur,hook] \arrow[dr,hook] & \\
		& \overline{X}\arrow[r,hook,"cl."]&{\P^N_B}
		\end{tikzcd}
	\end{center}
	such that all non-empty fibres of $\psi$ are $1$-dimensional.
	
	Further as in Step 2 above, Nisnevich locally on $X$ we obtain a morphism, $\phi \colon X \rightarrow \P^{1}_{T}$, where $T$ is a Nisnevich neighbourhood of $\P^{n-1}$. 
	Since $T$ is a smooth $B$-scheme, the theorem holds for $T$. 
	%Hence we can proceed as in Step 3 to obtain a Stein factorization $$ \overline{f_1}:\overline{Y_1}\ra \overline{Y_2}\xrightarrow{finite} \P^1_{\overline{T_1}},$$
	%	Again just as in Step 3, $\overline{Y_2}$ is of fiber dimension $n$. Now we argue that $Y_2 $ intersects the fiber of $\overline{Y_2}$ over any point $b_{1}$ of $B_{1}$. Let $Y_{2}'$ be an irreducible component of dimension $n$ of the fiber $(\overline{Y_2})_{{b}_{1}}$. Then the induced map $Y_{2}' \rightarrow( \P^1_{\overline{T}})_{{b}_{1}}$ is a finite morphism of schemes of same dimension, hence is an surjection to an irreducible component say $U$ of $( \P^1_{\overline{T}})_{{b}_{1}}$. Further $\P^1_{T}\setminus W$ intersects $U$(even though it is not fiberwise dense it still intersects components of dimension $n$). Taking inverse image of its intersection with irreducible component proves that $X_{2}$ intersects $Y$.
	The remaining proof is the same as in Step 3.
\end{proof}

The following lemma is used in the proof of Theorem \ref{ydense} above for applying induction on the length of the local complete intersection at a point.

\begin{lemma}\label{lemma-fibre-dimension-projective-closure}
	Let $X$ be a finite type scheme over $B$ such that the conclusion of Theorem \ref{ydense} holds. Assume that $\dim X_b=n+1$. Let $Y\subseteq X$ be a divisor and let $Y'\subseteq X'$ be its inverse image in $X'$. Denote by $\overline{Y'}\subseteq \overline{X'}$ the closure of $Y'$ in $\overline{X'}$. Then $\dim Y'_b=\dim (\overline{Y'})_b$.
\end{lemma}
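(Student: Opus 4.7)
I will prove the two inequalities $\dim Y'_b \leq \dim(\overline{Y'})_b$ and $\dim(\overline{Y'})_b \leq \dim Y'_b$ separately. The first is immediate: since $Y \subset X$ is closed and $X' \to X$ is flat (being Nisnevich), $Y'$ is closed in $X'$; combined with $X' \subset \overline{X'}$ being open, we get $Y' = X' \cap \overline{Y'}$, which exhibits $Y'$ as open in $\overline{Y'}$. Passing to fibres over $b$ shows that $Y'_b$ is open in $(\overline{Y'})_b$, giving $\dim Y'_b \leq \dim(\overline{Y'})_b$.

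For the reverse inequality, the plan is to argue componentwise. Let $E$ be an irreducible component of $(\overline{Y'})_b$ and consider two cases. If $E \cap X'_b \neq \emptyset$, then irreducibility of $E$ together with openness of $X'_b$ in $(\overline{X'})_b$ gives that $E \cap X'_b$ is dense in $E$; since $\overline{Y'} \cap X' = Y'$, the inclusion $E \subseteq \overline{Y'}$ forces $E \cap X'_b \subseteq Y'_b$, so $\dim E = \dim(E \cap X'_b) \leq \dim Y'_b$. If instead $E \cap X'_b = \emptyset$, I claim $\dim E \leq n$. Indeed, the construction of $\overline{X'}$ in Step~3 of the proof of Theorem~\ref{ydense} produces a compactification whose fibre over $b$ has dimension exactly $n+1$; thus any $(n+1)$-dimensional irreducible closed subset of $(\overline{X'})_b$ coincides with an $(n+1)$-dimensional component $C$ of $(\overline{X'})_b$. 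By the density conclusion of Theorem~\ref{ydense} such a $C$ equals the closure in $(\overline{X'})_b$ of some irreducible component $C_0$ of $X'_b$, so that $\emptyset \neq C_0 \subseteq C \cap X'_b = E \cap X'_b$, contradicting the case assumption. Hence $\dim E \leq n$, and $\dim E \leq n \leq \dim Y'_b$ since $Y'$ is a non-empty Cartier divisor in $X'$ and Krull's Hauptidealsatz applied at a closed point of $Y'_b$ lying in a top-dimensional component of $X'_b$ yields $\dim Y'_b \geq n$.

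The one non-routine point is the input $\dim(\overline{X'})_b \leq n+1$: the formal statement of Theorem~\ref{ydense} only asserts density of $X'_b$ in the $(n+1)$-dimensional components of $(\overline{X'})_b$ and does not on its face exclude higher-dimensional ones. However, this refinement is built into the proof of the theorem, where in Step~3 the compactification is constructed as the closure $\overline{X_2}$ (via Stein factorisation) whose fibre over $b$ is already of dimension $n+1$. Granting this refinement, the componentwise estimates above give $\dim(\overline{Y'})_b \leq \dim Y'_b$, and combined with the first inequality one obtains the desired equality.
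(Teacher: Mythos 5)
Your argument follows the same core mechanism as the paper's proof: granting $\dim(\overline{X'})_b=n+1$, any $(n+1)$-dimensional irreducible closed subset of $(\overline{X'})_b$ must be an $(n+1)$-dimensional irreducible component, hence by the density conclusion of Theorem \ref{ydense} is the closure of a component of $X'_b$, so its generic point lies in $X'_b\cap\overline{Y'}=Y'_b$; this is exactly how the paper excludes top-dimensional components of $(\overline{Y'})_b$ "at infinity". Your componentwise bookkeeping (components of $(\overline{Y'})_b$ meeting $X'_b$ are automatically of dimension $\le\dim Y'_b$) replaces the paper's case split on $\dim Y_b\in\{n,n+1\}$ and its homogenisation remark, which is a mild streamlining rather than a different method. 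You are also right to flag the refinement $\dim(\overline{X'})_b=n+1$: the paper's proof uses it as well (``since dimension of $(\overline{X})_b$ is $n+1$'') without deriving it from the stated conclusion of the theorem; it comes from Step 3 of the construction, as you say.

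The one genuine weak point is your justification of $\dim Y'_b\ge n$ in Case B. Krull's Hauptidealsatz gives $\dim_y Y'_b\ge \dim_y X'_b-1$ at a point $y\in Y'_b$, but for your conclusion you need $Y'_b$ to be nonempty and to contain a point lying on an $(n+1)$-dimensional component of $X'_b$, and neither is guaranteed by the hypotheses: $Y$ is only assumed to be a divisor in $X$, so $Y'_b$ could be empty or could lie entirely in lower-dimensional components of $X'_b$. Indeed, in that degenerate range the statement itself can fail (take $B=\Spec k[t]_{(t)}$, $X=\A^2_B$, $Y=V(tx-1)$: then $Y_b=\emptyset$ while $(\overline{Y})_b$ is a line). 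The paper sidesteps this by treating only the cases $\dim Y_b=n+1$ and $\dim Y_b=n$, i.e.\ it implicitly assumes $\dim Y_b\ge n$, which is what holds in the application of the lemma inside the proof of Theorem \ref{ydense}. So your proof is correct once $\dim Y'_b\ge n$ is taken as part of the standing assumptions (as the paper implicitly does); what does not work is deducing that inequality from the divisor condition alone, and you should either add it as a hypothesis or restrict to the two cases as the paper does.
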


\begin{proof}
As the conclusion of Theorem \ref{ydense} holds for $X$, after passing to the Nisnevich neighbourhoods $X'$ and $B'$, we may assume that $X=X'$, $B=B'$ and $Y=Y'$.  We have an embedding $X \rightarrow \A_B^N$ such that $X_b$ is dense in the $(n+1)$-dimensional components of $(\overline{X})_b$. Thus, we have inclusions

\begin{enumerate}
	\item $Y_b \subseteq X_b \subseteq (\overline{X})_b$,
	\item $Y_b \subseteq (\overline{Y})_b \subseteq (\overline{X})_b$.
\end{enumerate}

Since $Y$ is a divisor in $X$, it is given by a single equation $f$, say. Let $F$ be the homogenisation of $f$ (inside the projective closure $\overline{X}$). Then $F$ cuts out the subscheme $(\overline{Y})_b$ in $(\overline{X})_b$ in the fibre over $b\in B$. In this case, the dimension of $(\overline{Y})_b$ is at most one less than the dimension of $(\overline{X})_b$.\\ 
We now have two cases to consider depending on whether or not $Y_b$ contains $(n+1)$-dimensional components of $X_b$.\\

\noindent\underline{Case 1}: $\dim Y_b=\dim X_b=n+1$. In this case, $Y_b$ contains irreducible components of $X_b$ of dimension $n+1$. As $\dim (Y_b) \leq \dim (\overline{Y})_b\leq \dim (\overline{X})_b$, we must have that $\dim Y_b=\dim (\overline{Y})_b$, since $X_b$ is dense in the $(n+1)$-dimensional components of $(\overline{X})_b$.\\

\noindent\underline{Case 2}: $\dim Y_b =n = \dim (X_b)-1 $. In this case, we analyse the two possibilities for the dimension of $\overline{Y}_b$.

If it is exactly one less than $\dim \overline{X}_b$, we are done, since dimension of $(\overline{X})_b$ is $n+1$.

If not, then we have $(\overline{Y})_b = n+1$. This means that $\overline{Y}_b$ contains an irreducible component $Z\subseteq (\overline{X})_b$ of dimension $n+1$. As $X_b$ is dense  in the $(n+1)$-dimensional components of $(\overline{X})_b$, the generic point of $Z$ must lie in $X_b$ implying that it also lies in $Y_b$. 
This says that $\dim (X_b)=\dim (Y_b)$, which is a contradiction.

\end{proof}

\section{Applications}

In this section, we present an application of Theorem \ref{ydense} to the existence of finite maps to $\A^n$, Nisnevich locally (Theorem \ref{fer}).

\begin{remark}\label{vers}
	Let $B$ be the spectrum of a Noetherian ring of finite dimension. Let $g \colon X\rightarrow B$ be a finite type scheme over $B$. Let $x\in X$ be a point lying over a point $b\in B$ such that $g$ is a local complete intersection at $x$ and $\dim(X_b) = n$.  Let $  X \hookrightarrow \overline{X} \xrightarrow{\phi} \P^N_S$ be the embedding satisfying the fibrewise denseness condition stated in the Theorem \ref{ydense}. Note that the stated fibrewise denseness condition is equivalent to the fact that $ V_+(x_0) := H_{\infty}$ in $\P^N_S = $ Proj $S[\ x_0, \cdots ,x_n]\ $  cuts down the fibre dimension of the closed subscheme $(\overline{X})_b$. Then composing $\phi$ by any degree $d$ Veronese embedding of $\P^N_S$ in $\P^{N'}_S$   preserves the fibrewise denseness of the new embedding $X \hookrightarrow \overline{X} \xrightarrow{\psi} P^{N'}_S$.
\end{remark}

Let $B= Spec(R)$ with $\A^{n}_B = R[x_1,\ldots,x_n]$. Let $E$ be the $R$-span of $\{x_1,\ldots,x_n\}$ and consider $\sE:=\underline{\Spec}(Sym^{\bullet}E^{\vee})$ (note that $\sE(R)=E$). For any integer $d > 0$ and any $R$-algebra $A$, $\sE^d(A)$ parametrises all linear morphisms $v= (v_1,\ldots,v_{d}) \colon \A^{n}_T\rightarrow \A^{d}_T$, where $T= Spec(A)$. Considering $\A^n_B \hookrightarrow \P^{n}_B = $ Proj $R[x_0,\ldots,x_n]$, as a distinguished open subscheme $D(x_{0})$, we extend such a linear morphism to a rational map $ \overline{v} \colon \P^{n}_B \dashrightarrow  \P^{d}_B$ whose locus of indeterminacy $L_{v}$ is given by the vanishing locus of $v_1,\ldots,v_{d}$ and $x_0$, $V_{+}(x_0,v_1,\ldots,v_{d})  \subseteq \P^{n}_B$ (We will use this notation throughout what follows). Given any closed subscheme $Y$ in $\A^n_B$, we denote by $\overline{Y}$ its projective closure in $ \P^{n}_B $. For the following lemma we refer to \cite[Lemma 2.3]{schmidt2018}.

\begin{lemma}(see \cite[Lemma 2.3]{schmidt2018})\label{shafar}
	In the setting of the previous paragraph if $L_{v} \cap \overline{Y} = \emptyset$, then $\overline{v} \colon \overline{Y} \rightarrow  \P^{d}_B$ and $v \colon Y \rightarrow \A^d_B$ are finite maps.
\end{lemma}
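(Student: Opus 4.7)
The plan is to upgrade the a priori rational map $\overline{v}|_{\overline{Y}}$ to an honest morphism into $\P^d_B$ using the disjointness $L_v \cap \overline{Y} = \emptyset$, and then to exploit the fact that the pulled-back hyperplane bundle is very ample in order to deduce finiteness from properness.

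First, I would observe that $\overline{v}$ is given by the linear system generated by the sections $x_0, v_1, \ldots, v_d$ of $\sO_{\P^n_B}(1)$, whose common vanishing locus on $\P^n_B$ is precisely $L_v$. Since $L_v \cap \overline{Y} = \emptyset$, these sections globally generate $\sO_{\overline{Y}}(1) := \sO_{\P^n_B}(1)|_{\overline{Y}}$ and therefore determine an honest morphism $\overline{v}|_{\overline{Y}} : \overline{Y} \to \P^d_B$ satisfying $(\overline{v}|_{\overline{Y}})^*\sO_{\P^d_B}(1) \cong \sO_{\overline{Y}}(1)$. As $\overline{Y}$ is a closed subscheme of $\P^n_B$, this line bundle is very ample over $B$.

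Next, since $\overline{Y}$ is projective over $B$, the morphism $\overline{v}|_{\overline{Y}}$ is proper. On every fiber of $\overline{v}|_{\overline{Y}}$, the pullback of $\sO_{\P^d_B}(1)$ is trivial, yet it is also the restriction of the very ample $\sO_{\overline{Y}}(1)$; a trivial line bundle can only be ample on a zero-dimensional scheme, so all fibers are zero-dimensional. Proper plus quasi-finite (in the noetherian setting) forces finiteness, yielding the first assertion.

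For the affine statement, I would simply base change along the open immersion $\A^d_B = D_+(y_0) \hookrightarrow \P^d_B$. Since $\overline{v}$ carries $D_+(x_0) = \A^n_B$ into $D_+(y_0) = \A^d_B$ by construction, the preimage of $\A^d_B$ in $\overline{Y}$ under $\overline{v}|_{\overline{Y}}$ is $\overline{Y} \cap \A^n_B = Y$, so $v : Y \to \A^d_B$ is the restriction of the finite morphism $\overline{v}|_{\overline{Y}}$ to an open of the target, hence finite. The only real obstacle is justifying that $\overline{v}|_{\overline{Y}}$ is an honest morphism at all, but this is immediate from the base-locus description of $L_v$; everything else is a standard consequence of having very ample pullback of $\sO(1)$ from a proper noetherian scheme.
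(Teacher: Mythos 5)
The paper itself gives no argument for this lemma --- it simply defers to \cite[Lemma 2.3]{schmidt2018} --- so there is no in-paper proof to match; your write-up is a correct, self-contained proof, and it is essentially the standard argument underlying the cited result (finiteness of projection away from a linear center disjoint from the projective closure). Your chain --- the sections $x_0,v_1,\dots,v_d$ are base-point free on $\overline{Y}$ because their common zero locus is $L_v$, hence define a morphism with $\overline{v}^*\sO_{\P^d_B}(1)\cong\sO_{\P^n_B}(1)|_{\overline{Y}}$; properness of $\overline{Y}\to\P^d_B$; trivial-yet-ample restriction to fibers forces zero-dimensional fibers; proper plus quasi-finite implies finite; then restrict over $D_+(y_0)$ using $\overline{Y}\cap\A^n_B=Y$ --- is sound. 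The classical variant (as in the cited lemma) gets quasi-finiteness more geometrically: a fiber of the projection with center $L_v$ is a closed subscheme of a linear subspace with $L_v$ removed, hence of an affine scheme, and a scheme proper and affine over a field is finite; both routes are fine. Two small points you should state precisely: the assertion ``a trivial line bundle can only be ample on a zero-dimensional scheme'' requires properness over a field ($\sO$ is ample on any quasi-affine scheme), which holds here exactly because you have already established that $\overline{v}|_{\overline{Y}}$ is proper, so trivial plus ample gives quasi-affine, and quasi-affine plus proper gives finite; and, for the identification of your morphism with $v$ on $Y$, note that every irreducible component of $\overline{Y}$ meets $\A^n_B$ (it is the closure of $Y$), and on $D_+(x_0)$ the map $[x_0:v_1:\cdots:v_d]$ dehomogenizes to $(v_1,\dots,v_d)$, so the restriction over $\A^d_B$ is indeed the finite map $v\colon Y\to\A^d_B$.
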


 \begin{proposition}
Let $B$ be the spectrum of a Noetherian ring of finite dimension. Let $g \colon X\rightarrow B$ be a finite type scheme over $B$. Let $x\in X$ be a point lying over a point $b\in B$ such that $g$ is a local complete intersection at $x$ and $\dim(X_b) = n$.  Let $k$ be the residue field at the point $b \in B$. Let $\overline{X}$ be the  closure of $X$ in $\P^N_B$. Then there exist $\nu = (\nu_1, \cdots, \nu_n)$ such that $L_{\nu} \cap (\overline{X})_b = \emptyset $.
	 \end{proposition}
 
 \begin{proof}
 	Denote by $H_{\infty}$ the hyperplane $V_+(x_0)$ in $\P^N_k$. We have $dim (\overline{X_b} \cap H_{\infty}) = n-1$. Hence by Theorem \ref{ydense}, $dim ((\overline{X})_b \cap H_{\infty}) = n-1$. As $k$ need not be infinite we have hypersurfaces---not necessarily hyperplanes --- $H_1, \cdots H_n$ in $\P^N_k$ such that $ (\overline{X})_b \cap \tilde{H} = \emptyset $, where $\tilde{H} = H_{\infty} \cap H_1 \cap \cdots \cap H_n$. Without loss of generality assume that these hypersurfaces have degree $d $. Take degree $d$ Veronese embedding and consider the closed embedding $(\overline{X})_b \hookrightarrow \P^{N'}_k$. Now the proposition follows from Remark \ref{vers}.
 \end{proof}

We state the next theorem whose proof is now exactly the same as Proposition 3.9 of \cite{deshmukh}. Theorem \ref{fer} is a crucial step in proving Gabber presentation lemma over a general base.  

\begin{theorem}\label{fer}
	Let $X=Spec(A)/B$ be a smooth, equi-dimensional, affine, irreducible scheme of relative dimension $n$ and $B$, spectrum of a Henselian local ring. Let $Z=Spec(A/f)$, $z$ be a closed point in $Z$ lying over $b\in B$, where $f$ is such that $\dim (Z_b) < \dim (X_b)$. Then there is an open subset $\Omega\subseteq \sE^{d}$ with $\Omega(R)\neq \emptyset$ such that for all $\Psi \in \Omega(R)$, $\Psi_{|Z} \colon Z\ra \A^{n-1}_S$ is finite.
\end{theorem}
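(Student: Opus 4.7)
The plan is to reduce to the setting of Lemma~\ref{shafar} by applying Theorem~\ref{ydense} and the preceding Proposition in turn, just as in \cite[Proposition 3.9]{deshmukh}. First I would observe that $Z$, being cut out by a single function $f$ on the smooth $B$-scheme $X$, is a local complete intersection at the closed point $z$ with fiber dimension $n-1$. Applying Theorem~\ref{ydense} to $(Z,z)$ produces Nisnevich neighborhoods $Z' \to Z$ and $B' \to B$; but since $B$ is the spectrum of a Henselian local ring with closed point $b$, any Nisnevich neighborhood of $b$ admits a section and may be taken to be $B$ itself. Thus, after replacing $Z$ by a suitable Nisnevich neighborhood of $z$, I obtain a closed embedding $Z \hookrightarrow \A^{N}_B$ whose projective closure $\overline{Z} \subset \P^{N}_B$ has $Z_b$ dense in the union of $(n-1)$-dimensional irreducible components of $(\overline{Z})_b$.

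Next, the preceding Proposition, combined with Remark~\ref{vers}, yields, after composing with a suitable degree-$d$ Veronese embedding $\P^{N}_B \hookrightarrow \P^{N'}_B$, a tuple $\nu = (\nu_1,\ldots,\nu_{n-1}) \in \sE^{n-1}(R)$---where $\sE$ is now formed from the coordinates of the new ambient $\A^{N'}_B$---such that the locus of indeterminacy satisfies $L_\nu \cap (\overline{Z})_b = \emptyset$.

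I would then define $\Omega \subset \sE^{n-1}$ to be the subset of those $\Psi$ for which $L_\Psi \cap \overline{Z} = \emptyset$. This is an open subscheme: the universal indeterminacy locus is a closed subset of $\sE^{n-1} \times_B \overline{Z}$, and its image under the proper projection to $\sE^{n-1}$ is closed, with $\Omega$ its complement. To check that $\nu$ is an $R$-point of $\Omega$, note that $L_\nu \cap \overline{Z}$ is proper over $B$, so its image $W \subset B$ is closed; the fiber of $W$ over $b$ is empty by construction, but any nonempty closed subscheme of the local base $B$ contains the closed point $b$, forcing $W = \emptyset$. Hence $L_\nu \cap \overline{Z} = \emptyset$ and $\nu \in \Omega(R)$. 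Finiteness of $\Psi|_Z : Z \to \A^{n-1}_B$ for every $\Psi \in \Omega(R)$ then follows immediately from Lemma~\ref{shafar}.

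The main subtlety I expect is notational bookkeeping: matching the dimension $n-1$ appearing in $\sE^{n-1}$ with the ambient affine space $\A^{N'}_B$ produced by Theorem~\ref{ydense} together with the Veronese embedding, and justifying that the Nisnevich replacement of $Z$ behaves well with respect to the original $Z$ using the Henselian-local structure of $B$. Once the identifications are in place, the argument is the same as in \cite[Proposition 3.9]{deshmukh}.
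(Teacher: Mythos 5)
Your overall route is the intended one: the paper gives no independent argument for this theorem, deferring to \cite[Proposition 3.9]{deshmukh}, and the chain you describe --- Theorem \ref{ydense} applied to the divisor $Z\subset X$ (a local complete intersection of length one with fibre dimension $n-1$), the preceding Proposition together with Remark \ref{vers} to produce $\nu=(\nu_1,\ldots,\nu_{n-1})$ with $L_\nu\cap(\overline{Z})_b=\emptyset$, openness of $\Omega$ via the proper projection from the universal indeterminacy locus, promotion of emptiness on the special fibre to emptiness over all of $B$ because a nonempty closed subset of a local base contains the closed point, and Lemma \ref{shafar} to conclude finiteness --- is exactly the machinery assembled in Section 3 for this purpose.

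The one step that does not hold as you state it is the disposal of the Nisnevich neighbourhood on the $Z$-side. Henselianness of $B$ splits the neighbourhood $(B',b)\to(B,b)$, so you may indeed take $B'=B$; but it does nothing to the neighbourhood $(Z',z)\to(Z,z)$ produced by Theorem \ref{ydense}, which is an \'etale map that is in general neither an open immersion nor surjective (the local ring of $Z$ at $z$ is not Henselian, as $Z$ is merely of finite type over $B$). Theorem \ref{ydense} therefore only re-embeds $Z'$, and your argument yields a nonempty open $\Omega$ of projections $\Psi$, in the coordinates of the new ambient affine space, with $\Psi|_{Z'}$ finite; finiteness of $\Psi|_{Z'}$ neither implies nor is implied by finiteness of $\Psi|_Z$ for the original $Z$, so ``using the Henselian-local structure of $B$'' cannot justify identifying $Z'$ with $Z$. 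To reach the conclusion as literally stated you must read the theorem the way the preceding Proposition is phrased, namely assume one is already in the situation where the conclusion of Theorem \ref{ydense} holds for $Z$ itself (equivalently, that the Nisnevich shrinking of $(Z,z)$ --- harmless for the intended application to the presentation lemma --- has already been performed), or else build that replacement explicitly into the statement. With that reading made explicit, the remainder of your argument (openness of $\Omega$, the verification that $\nu\in\Omega(R)$, and the appeal to Lemma \ref{shafar}) is correct and agrees with the proof of \cite[Proposition 3.9]{deshmukh} to which the paper appeals.
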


\begin{remark}
	A weaker version of Gabber presentation lemma for essentially smooth Henselian schemes over a local Henselian base has been proved in \cite{druzhinin2019}. It turns out that for many applications in $\A^1$-homotopy theory this version of Gabber presentation lemma is sufficient. For example, this can be used to prove Shifted $\A^1$-Connectivity over a general base (\!\! \cite{druzhinin2019}).
\end{remark}

\bibliographystyle{alphanum}
\bibliography{biblio}

\end{document}